\documentclass[12pt]{amsart}
\usepackage{amssymb}
\usepackage{amsmath}
\usepackage{latexsym}
\usepackage{amsthm}
\usepackage{graphics}
\usepackage{mathrsfs}
\usepackage[utf8]{inputenc}
\usepackage{tikz}

\usetikzlibrary{positioning}
\usetikzlibrary{arrows}

\textwidth 5.8in \textheight 8.0in \oddsidemargin 0.0in
\evensidemargin 0.0in
\setcounter{page}{1} \makeatletter
\def\thmhead@plain#1#2#3{%
 \thmname{#1}\thmnumber{\@ifnotempty{#1}{
 }#2}%
 \thmnote{ \the\thm@notefont(#3)}}
\let\thmhead\thmhead@plain
\def\swappedhead#1#2#3{%
 \thmnumber{#2}\thmname{\@ifnotempty{#2}{. }#1}%
 \thmnote{ \the\thm@notefont(#3)}}
\makeatother

\theoremstyle{definition} 
\newtheorem{definition}{Definition}[section]
\newtheorem{remark}[definition]{Remark}

\theoremstyle{plain}      

\newtheorem{theorem}[definition]{Theorem}

\newtheorem{lemma}[definition]{Lemma}


\begin{document}

\keywords{de Bruijn Sequence, generalized de Bruijn graph,
Hamiltonian cycle, universal cycle, nonlinear feedback shift
register, generation of combinatorial objects}
\title[]    {Hamiltonicity of the Cross-Join Graph of de~ Bruijn Sequences}
\author[A. Alhakim]{Abbas Alhakim\\Department of Mathematics\\
American University of Beirut\\
Beirut, Lebanon\\
Email: aa145@aub.edu.lb}
\begin{abstract}
A generalized de Bruijn digraph generalizes a de Bruijn digraph to
the case where the number of vertices need not be a pure power of an
integer. Hamiltonian cycles in these digraphs thus generalize
regular de~Bruijn cycles, and we will thus refer to them simply as
de Bruijn cycles. We define the cross-join to be the graph with all
de Bruijn cycles as vertices, there is an edge between two of these
vertices if one can be obtained from the other via a cross-join
operation. We show that the cross-join graph is connected. This in
particular means that any regular de Bruijn cycle can be
cross-joined repeatedly to reach any other de Bruijn cycle,
generalizing a result about regular binary de Bruijn cycles by
Mykkeltveit and Szmidt \cite{MykkSzmidt2014}. Furthermore, we
present an algorithm that produces a Hamiltonian path across the
cross-join graph, one that we may call a de~Bruijn sequence of de
Bruijn sequences.
\end{abstract}

\maketitle
\section{Introduction}
Consider an alphabet $A$ of size $d$. A de Bruijn cycle of order $k$
over the alphabet $A$ is a periodic sequence of characters of $A$
such that, within one period, every possible string of size $k$
occurs exactly once as a substring. For example $00110$ and
$0022120110$ are de Bruijn sequences of order $2$ over the alphabets
$\{0,1\}$ and $\{0,1,2\}$ respectively. These sequences were
popularized by de Bruijn \cite{deBruijn46} and Good \cite{Good46}
even though their existence was established much earlier, see
Flye-Sainte Marie \cite{Marie} and Martin \cite{Martin34}. Beyond
the mere proof of existence for any $d$ and $k$, de Bruijn
\cite{deBruijn46} and Good \cite{Good46} established that the number
of de Bruijn cycles is $(d!)^{d^{k-1}}/d^k$.

de Bruijn cycles play a pivotal role in coding theory and
cryptography as they are the main building blocks of many stream
cyphers. The binary case is especially useful, although there has
been much research on sequences with non-binary alphabets. Linear
feedback shift register sequences of maximal length are indeed de
Bruijn sequences that lack the all zero string, but they are known
not to be safe and hence not useful for cryptographic applications,
see Massey \cite{Massey67}, for example. de Bruijn cycles based on
nonlinear feedback functions are far more useful and more numerous
than linear ones. They are, however, far from being mathematically
understood as a class. There are many references in the literature
that introduce and study properties of subclasses of de Bruijn
cycles that are nonlinearly constructed. Golomb initiated this study
in his pioneering work \cite{Golomb67}, but there are newer methods
that have been introduced ever since Golomb first published his book
in 1967. Such methods are typically combinatorial or graph-theoretic
producing one de Bruijn sequences or a collection of similar
sequences. Fredricksen \cite{fred1982} is an excellent review
articles that outlines, and often details, many of these known
methods and algorithms. Much more research on nonlinear sequences
has been done in the past two decades. Some recent publications are
\cite{Dubrova}, \cite{Szmidt}, \cite{Turan}. Another breed of
nonlinear sequences are those produced by efficient successor rules
for both binary and non-binary alphabets as in \cite{sawada1} and
\cite{sawada2}.

One main method of generating de Bruijn sequences is the cross-join
technique, which starts with a de Bruijn sequence and interchanges
two appropriately chosen pairs of vertices to obtain another de
Bruijn sequence. The aim of this paper is to show that any de Bruijn
sequence of any alphabet size can be transformed via a sequence of
cross-joins into any other de Bruijn sequence of the same order. In
the binary case, this result was recently proven by Mikkeiltveit and
Szmidt~\cite{MykkSzmidt2014}. Our method applies in the binary case
as well. Our main objective is to investigate cross-join
connectivity of non-binary de Bruijn sequences. However, it turns
out that our method of proof applies without modification to a
generalized version of de Bruijn sequences where the number of
vertices need not be a pure power of some alphabet size $d$.  We
give our proof in the language of the latter, in order to widen the
scope of the result as well as to pin-point to the actual assumption
that is sufficient for the proof. We explain the set up of
generalized de Bruijn digraphs in Section~\ref{Sec:gendB}. In
Section~\ref{S:connectedness} we formulate and prove a fundamental
lemma (Lemma~\ref{L:fundamental}) and use it to prove connectedness
of the cross-join graph (Definition~\ref{D:CR_graph}). In
Section~\ref{S:hamiltonicity} we present an algorithm that generates
a Hamiltonian path of de~Bruijn cycles, given an arbitrary de Bruijn
cycle. We also use the fundamental lemma to establish the
correctness of this algorithm. We round up in
Section~\ref{S:conclusion} with some open problems. An appendix
gives a working implementation of the algorithm in the open source
language R. In the next section we give some background material.

\section{Preliminaries}
There are several known methods to generate de Bruijn sequences. The
one method that is most understood is the algebraic method that we
outline next. First we assume that the alphabet $A$ is residue ring
$\mathbb{Z}_d$. A construction is achieved if we have an initial
state $(s_1,\ldots,s_k)$ and a rule that provides the next symbol
$a_{k+1}$ of the alphabet $A$ when the current state is
$(a_1,\ldots, a_k)$. In this case the next state is
$(a_2,\ldots,a_k,a_{k+1})$. Thus, the next state is determined by a
recurrence relation of order $k$ and of the form
$$a_{i}=f(a_{i-k},\ldots,a_{i-1})$$
where $i\geq k+1$ and \textit{the feedback function} $f$ maps $A^k$
to $A$. Since the number of possible states is finite, iterating the feedback function gives a periodic sequence. de Bruijn sequences
correspond to recurrence functions with the maximal period of $d^k$.
The sequence is called linear when the recurrence function is linear
and homogeneous. When a linear homogeneous feedback function is
applied to an initial string of zeros we obtain the constant zero
sequence. Therefore, the maximal possible period of a linear
feedback function is $d^k-1$. A sequence constructed this way is one
that misses the all-zero string of size $k$. Such a sequence is
called a maximal length linear feedback shift register sequence (LFSR), or
sometimes a ``punctured'' linear de Bruijn sequence, because a full
de Bruijn sequence can then be obtained by appending a zero next to
any of the $d-1$ occurrences of $k-1$ consecutive zeros.

We will concern ourselves here with a method called the cross-join
method, which starts with a de Bruijn cycle of some order and
produces another de Bruijn cycle of the same order.  To see how this
is done, let $w_k=(x_1,\ldots,x_k)$ be a word of size $k$. Any word
$(\hat{x}_1,x_2,\ldots,x_k)$ with $\hat{x}_1\neq x_1$ is a conjugate
of $w_k$. Note that two conjugate words can reach the exact same
words $(x_2,\ldots,x_k,a)$ for $a\in A$. It can be seen now that
interchanging the successors of two conjugate words in a de Bruijn
cycle splits it into two smaller cycles. These two cycles can be
joined back into another de Bruijn cylcle by locating a word of one
cycle whose conjugate is on the other cycle, and interchanging the
successors of these two words. For example, interchanging the
successors of $110$ and $010$ in the sequence $0001110100$ we get the two cycle $00011100$ and
$0101$ (which can be thought of as the sequences of vertices
$(000,001,011,111,110,100,000)$ and  $(010,101,010)$.  Since the
conjugate words $001$ and $101$ are not on the same cycle,
interchaninging their succesors joins the two cycles into the de
Bruijn cycle (arranged to start at $000$): $0001011100$.

Two pairs of vertices that allow to transform a de Bruijn cycle $u$
to another de Bruijn cycle $v$ are called cross-join pairs. In the
binary case, each vertex has exactly one conjugate, so it is enough
to determine one vertex from each pair and the resulting pair is
called a cross-join pair. The idea of cross-joining a binary de
Bruijn sequence into another has been attractive to many
investigators, especially because it is always possible to generate
a de Bruijn sequence via LFSR logic and cross-join it one or more
times to obtain another nonlinear de Bruijn sequence. Chang et. al.
\cite{Chang} conjectured that any two binary de Bruijn sequences
obtained by maximal period LFSR sequences of the same order $k$ have
the same number of cross-join pairs that only depends on $k$, and
therefore the same number of de Bruijn sequences can be made from
each LFSR by using a single cross-join operation. This common number
is $\displaystyle\frac{(2^{k-1}-1)(2^{k-1}-2)}{6}$. The conjecture
was proven by Helleseth and Kl\"{o}ve \cite{hellesethKlove}. The
author of this paper was not able to find any information in the
literature on the number of direct cross-join neighbors of a de
Bruijn sequence when it is not based on a maximal period LFSR. Using
computation to inspect this number of neighbors for low order binary
de Bruijn sequences, we find that many distinct values exist.
Indeed, the $16$ binary de Bruijn sequences of order $4$ are equally
divided between sequences with $7$ cross-join neighbors and
sequences with $10$ cross-join neighbors. The case of binary
sequences of order $5$ is quite different. Table~\ref{T:neighbors}
reports the possible number of neighbors along with the frequency of
vertices that have this number of neighbors. Notice that the formula
of Chang \textit{et al} \cite{Chang} is 7 and 35 respectively for
$k=4$ and $5$, while the corresponding numbers of maximal LFSR
sequences are $2$ and $15$ so that many nonlinear de Bruijn
sequences share this number of neighbors with the LFSR sequences.

\begin{table}
\small
\begin{tabular}{l|lllllllllllllllll}
$n$ & 31 & 32 & 33 & 34 & 35 & 36 & 37 & 38 & 39 & 40 & 41 & 42 & 43
& 44
& 45 & 46 & 47\\

$f$ & 88 & 152 & 240 & 272 & 216 & 136 & 208 & 16 & 176 & 64 & 48 &
16 & 40 & 0 & 112 & 0 & 136\\\hline

$n$ & 48 & 49 & 50 & 51 & 52 & 53 & 54 & 55 & 56 & 57 & 58 & 59 & 60
& 61
& 62 & 63 & 64\\

$f$ & 48 & 32 & 0 & 16 & 0 & 0 & 0 & 0 & 0 & 0 & 0 & 0 & 8 & 0 & 16
& 0 & 8
\end{tabular}
\caption{The frequency $f$ of binary de Bruijn sequences of order
$5$ with $n$ cross-join neighbors for all possible values of $n$}\label{T:neighbors}
\end{table}

Recently, Mykkeltveit and Szmidt \cite{MykkSzmidt2014} settled the
following question in the affirmative. ``Is it possible to obtain
any binary de Bruijn sequence by applying a sequence of cross-join
pair operations to a given binary de Bruijn sequence?'' They
mentioned that this  is a several-decade-old question that was
recently asked at the International Workshop on Coding and
Cryptography 2013 in Bergen. They even claim this result as an
explanation of the origins of nonlinear Boolean functions that yield
de Bruijn cycles. This is based on the fact that once a cross-join
pair is identified, the feedback function of the initial de Bruijn
sequence can easily be altered to give the feedback function of the
new sequence.

As mentioned in the introduction, the main theme of this paper is to
examine the main result of \cite{MykkSzmidt2014} in a more general
setting. In the binary case, it is well known that for a feedback
function $f$ to produce a pure cycle (including a de~Bruijn cycle),
it is necessary that $f(x_1,\ldots,x_k)=x_1+g(x_2,\ldots,x_k)$. It
is the function $g$ that Mykkeltveit and Szmidt
\cite{MykkSzmidt2014} used in their proof. Unfortunately, there is
no such representation for non-binary feedback functions, calling
for a different method of proof.

We formulate and prove our results for the set of Hamiltonian cycles
(the analogues of de Bruijn cycles) in the so-called $(d,N)$
generalized de Bruijn digraphs, in which the number of vertices is
any positive integer $N\geq2$ and which boils down to a regular de
Bruijn dighraph of order $k$ when $N=d^k$.

\section{Generalized de Bruijn digraphs}\label{Sec:gendB}
A de Bruijn sequence is interchangeably called a de Bruijn cycle
because it can be seen as a Hamiltonian cycle on the de Bruijn
digraph.  A de Bruijn digraph with alphabet $A=\{0,1,\ldots,d-1\}$
has $d^k$ vertices which can be taken as the set of $d^k$ vectors
$(x_1,\ldots,x_k)$. For two vertices $\textbf{a}=(a_1,\ldots,a_k)$
and $\textbf{b}=(b_1,\ldots,b_k)$, there exists an edge connecting
$\textbf{a}$ to $\textbf{b}$ if and only if $b_i=a_{i+1}$ for
$i=1,\ldots,k-1$. When the vertices are regarded as decimal numbers
represented in base $d$, the set of edges consists of all pairs
$(x,y)$ (or interchangeably $x\rightarrow y$) where $x$ and $y$ are
integers in $\{0,\ldots,d^k-1\}$ and $y=dx+r\mod d^k$,
$r=0,1\ldots,d-1$.

In a generalized de Bruijn digraph, $d^k$ is replaced by any integer
$N>d$. This digraph was introduced to dispense with the restrictive
number of vertices in ordinary de Bruijn digraphs. Formally, a
generalized de Bruijn digraph $G_B(N,d)=(V,E)$ where the vertex set
is $V=\{0,1,\ldots,N-1\}$ and $(x,y)$ is contained in the edge set
$E$ if and only if $y=dx+r \mod N$ for some $r\in\{0,\ldots,d-1\}$.

This digraph preserves many of the properties of ordinary de Bruijn
digraphs. As shown below, it is a regular digraph. Also, Imase and
Itoh \cite{ImaseItoh}, Reddy, Pradhan and Kuhl \cite{ReddyPK} prove
that $G_B(N,d)$ has a very short diameter just like de Bruijn
digraphs. They also show that  $G_B(N,d)$ is strongly connected.
Indeed, Du and Hwang \cite{DuHwang} show that  $G_B(N,d)$ is
Hamiltonian when $gcd(d,N)>1$. Du et al \cite{Du91} establishes the
result when $gcd(d,N)=1$ and $d>2$. That is, $G_B(N,d)$ is
Hamiltonian except when $d=2$ and odd $N$. In the rest of the paper
the following notation will be used. If $(x,y)\in E$, we say that
$y$ is a successor of $x$ and that $x$ is a predecessor of $y$. The
set of possible successors of a vertex $x$ is denoted by
$\Gamma_x^+$ while the set of predecessors is $\Gamma_x^-$. Two
vertices $x_1$ and $x_2$ are said to be conjugate vertices if there
exist two vertices $y_1$ and $y_2$ such that $(x_1,y_1)$,
$(x_1,y_2)$, $(x_2,y_1)$ and $(x_2,y_2)$ are all edges in
$G_B(N,d)$. In this case we also say that $y_1$ and $y_2$ are
companion vertices. A path is a sequence of vertices $x_1,\ldots,
x_k$ such that $(x_i,x_{i+1})$ is an edge for all $i=1,\ldots,k-1$.
A path is simple if all of its vertices are distinct. A cycle is a
path in which the first and last vertices coincide. A cycle is
simple if, except for the first and last, all of its vertices are
distinct. A Hamiltonian path (resp. cycle) is a simple path (resp.
cycle) that includes all the vertices of the digraph. Since
$G_B(N,d)$ generalizes de Bruijn digraphs, we are going to refer to
a Hamiltonian cycle of $G_B(N,d)$ simply as a de~Bruijn cycle. When
$N=d^k$ for some integers $d>1$ and $k\geq1$, $G_B(N,d)$ reduces to
the regular $d$-ary de~Bruijn digraph of order $k$ which we denote
by $B(d,k)$. Figures~(1) and (2) illustrate regular and generalized
de Bruijn digraphs.

\begin{figure}\label{F:RegulardBGraphs}
\begin{center}
\begin{tabular}{lll}
\begin{tikzpicture}
[->,>=stealth',shorten >=1pt,node distance=1.5cm,auto]
\tikzstyle{main node}=[circle, draw, align=center];

\node[main node] (0) {$000$}; \node[main node] (1) [below left of=0]
{$001$}; \node[main node] (2) [below right of=1] {$010$}; \node[main
node] (4) [below right of=0] {$100$}; \node[main node] (5) [below
of=2] {$101$}; \node[main node] (3) [below left of=5] {$011$};
\node[main node] (6) [below right of=5] {$110$}; \node[main node]
(7) [below right of=3] {$111$};

\path (0) edge [in=120,out=60,loop] node {} (0)
    edge node {} (1)
(1) edge node {} (2)
    edge node {} (3)
(2) edge node {} (4)
    edge [bend right=10] node {} (5)
(3) edge node {} (6)
    edge node {} (7)
(4) edge node {} (0)
    edge node {} (1)
(5) edge [bend right=10] node {} (2)
    edge node {} (3)
(6) edge node {} (4)
    edge node {} (5)
(7) edge node {} (6)
    edge [in=300,out=240,loop] node {} (7);
\end{tikzpicture}
&\;\;\;\;\;
\begin{tikzpicture}[->,>=stealth',shorten >=1pt,node distance=1.5cm,auto]
\tikzstyle{main node}=[circle, draw, align=center];

\node[main node] (0) {$00$};

\node[below of=0] (v0) {};

\node[left of=v0] (v1) {};

\node[right of=v0] (v2) {};

\node[main node, left of=v1] (1) {$02$};

\node[main node, draw, circle, right of=v2] (2) {$20$};

\node[below of=1] (v3) {};

\node[main node, right of=v3] (3) {$10$};

\node[right of=3] (v4) {};

\node[main node, right of=v4] (4) {$01$};

\node[right of=4] (v5) {};

\node[main node, right of=v5] (5) {$22$};

\node[main node, below of=v3] (6) {$21$};

\node[right of=6] (v6) {};

\node[right of=v6] (v7) {};

\node[right of=v7] (v8) {};

\node[main node, right of=v8] (7) {$12$};

\node[main node, below of=v7] (8) {$11$};

\path (0) edge [in=120,out=60,loop] node {} (0)
    edge [bend right=10] node {} (1)
    edge node {} (4)
(1) edge [bend right=10] node {} (6)
    edge [bend left=5] node {} (2)
    edge node {} (5)
(2) edge [bend right=10] node {} (0)
    edge [bend left=5] node {} (1)
    edge [bend right=10] node {} (4)
(3) edge [bend right=10] node {} (1)
    edge [bend left=20] node {} (4)
    edge  node {} (0)
(4) edge [bend left=20] node {} (3)
    edge [bend right=10] node {} (7)
    edge  node {} (8)
(5) edge [in=30,out=-30,loop] node {} (5)
    edge node {} (2)
    edge node {} (6)
(6) edge [bend right=10] node {} (3)
    edge [bend right=10] node {} (8)
    edge [bend right=5] node {} (7)
(7) edge [bend right=10] node {} (2)
    edge node {} (5)
    edge [bend right=5] node {} (6)
(8) edge [in=-60,out=-120,loop] node {} (8)
    edge [bend right=10] node {} (7)
    edge node {} (3);
\end{tikzpicture}
\end{tabular}
\caption{de Bruijn digraphs $B(3,2)$ (left) and $B(2,3)$ (right).}
\end{center}
\end{figure}
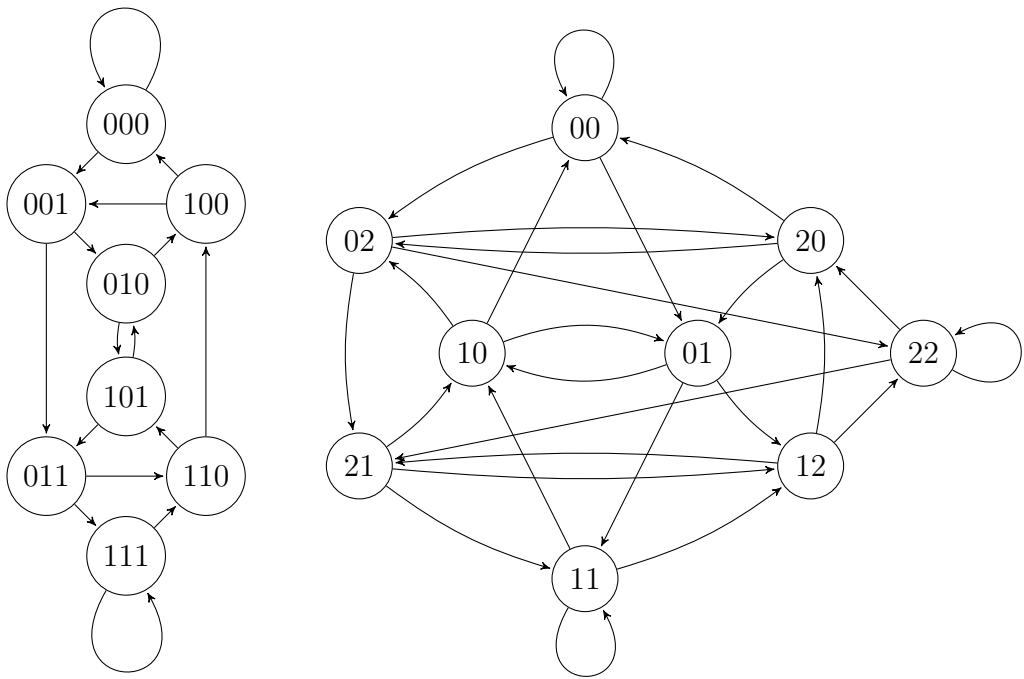

A cross-join operation is performed to a de~Bruijn cycle $dB$ as
follows. First locate a conjugate pair of vertices on $dB$ and
interchange their successors, thus splitting $dB$ into two shorter
cycles. Next locate another conjugate pair of vertices, each
residing on a different cycle, and interchange their successors to
obtain a new de~Bruijn cycle. We illustrate with two examples. The
first one uses the de~Bruijn cycle $0,1,3,7,6,5,2,4,0$ of $G_B(8,2)$
(with a $0$ at the end emphasizing the cyclic nature). Interchanging
the successors of $1$ and $5$ (the cross pair) we get the two cycles
$0,1,2,4,0$ and $3,7,6,5,3$. Interchanging the successors of the
conjugate pair $(2,6)$ (the join pair) we get the de~Bruijn sequence
$0,1,2,5,3,7,6,4,0$.

The reader is urged to verify that the de~Bruijn cycle
$0,2,1,5,3,4,0$ of $G_B(6,3)$ can be cross-joined into
$0,1,5,3,4,2,0$ using the two pairs $(2,4)$ and $(0,4)$
consecutively.

\begin{definition}\label{D:adjacent_cycles}
Two de Bruijn cycles are called cross-join adjacent, or simply adjacent, if it is possible to
cross-join one into another via a single cross-join operation.
\end{definition}
This allows for the following definition.
\begin{definition}\label{D:CR_graph}
The cross-join graph $\mathcal{C}(N,d)$ of $G_B(N,d)$ has the set of
all de~Bruijn cycles as the set of vertices. There is an edge
between two vertices if they are adjacent in the sense of
definition~\ref{D:adjacent_cycles}. In particular,
$\mathcal{C}(d,d^k)$ is the cross-join graph of a regular de Bruijn
digraph of order $k$.
\end{definition}

\begin{lemma}\label{L:regular}
$G_B(N,d)$ is $d$-regular. That is, each vertex has $d$ successors
and $d$ predecessors.
\end{lemma}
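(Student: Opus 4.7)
The plan is to count the out-degree and in-degree of an arbitrary vertex separately, using elementary number theory.

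For the out-degree, fix $x \in \{0,\ldots,N-1\}$. Its successors are, by definition, the residues modulo $N$ of the $d$ consecutive integers $dx, dx+1, \ldots, dx+(d-1)$. Since $N>d$, these $d$ consecutive integers yield $d$ distinct residues modulo $N$, so $|\Gamma_x^+| = d$. This half is essentially immediate.

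For the in-degree, fix $y$ and let $g=\gcd(d,N)$. A vertex $x$ is a predecessor of $y$ precisely when $dx \equiv y-r \pmod N$ for some $r\in\{0,\ldots,d-1\}$, i.e. when $dx \bmod N$ lies in the window $W_y := \{y-d+1, y-d+2, \ldots, y\} \pmod N$. I would then invoke two standard facts about the multiplication-by-$d$ map $\varphi:\mathbb{Z}_N \to \mathbb{Z}_N$, $\varphi(x)=dx \bmod N$: first, $\varphi$ is a $g$-to-one map whose image is exactly the subgroup $g\mathbb{Z}_N$ of multiples of $g$ in $\mathbb{Z}_N$; second, because $g$ divides both $d$ and $N$, any window of $d$ consecutive residues mod $N$ contains exactly $d/g$ elements of $g\mathbb{Z}_N$.

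Combining, the number of predecessors of $y$ equals $(d/g)\cdot g = d$, so $|\Gamma_y^-|=d$, and $G_B(N,d)$ is $d$-regular. I expect the only slightly delicate point to be the predecessor count when $g=\gcd(d,N)>1$, where $\varphi$ fails to be injective; but this is precisely what the $g$-to-one description of $\varphi$ and the divisibility $g\mid d$ make clean, so no further obstacle is anticipated.
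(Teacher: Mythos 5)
Your argument is correct and is essentially the paper's own proof in different clothing: the paper counts, for each $r$ with $\delta=\gcd(d,N)$ dividing $y-r$, the $\delta$ solutions of $dx\equiv y-r\pmod N$ and notes there are $d/\delta$ such $r$, which is exactly your ``$g$-to-one map onto $g\mathbb{Z}_N$ intersected with a window containing $d/g$ multiples of $g$'' computation. The only difference is that you spell out the (immediate) out-degree count, which the paper dismisses with ``we only need to show\ldots''.
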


\begin{proof}
We only need to show that each vertex in $G_B(N,d)$ has exactly $d$
predecessors. Let $\gcd(d,N)=\delta$. Given a vertex $y$ between $0$
and $N-1$, we need to count the number of solutions to the equations
\[
y\equiv dx+r \mod N,
\]
where $r$ can be $0,1,\ldots,d-1$. Equivalently, $dx\equiv(y-r)\mod
N$. When $y-r$ is not a multiple of $\delta$ there obviously is no
solution. Suppose that $y-r=\delta t$ for some $t$ in
$\{0,1,\ldots,N/{\delta}-1\}$. The equation implies that
$(d/\delta)x\equiv t\mod N/\delta$. Since
$\gcd(d/\delta,N/\delta)=1$, the last equation admits a unique
solution modulo $N/\delta$ and therefore it has $\delta$ solutions
modulo $N$. The lemma now follows, as there are exactly $d/\delta$
multiples of $\delta$ in the $d$ consecutive integers
$y,y-1,\ldots,y-d+1$ for each fixed $y$.
\end{proof}

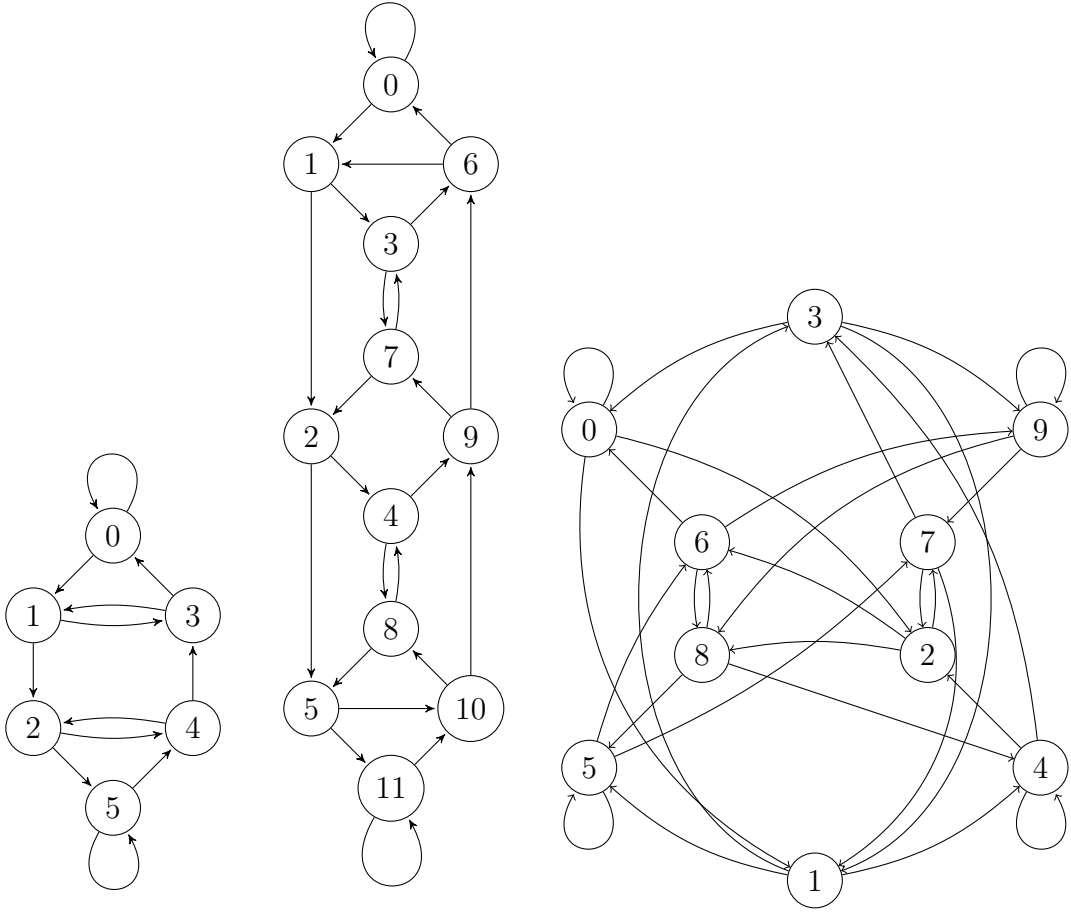
\begin{figure}[hbtp]\label{F:gendBGraphs}
\begin{center}
\begin{tabular}{lll}
\begin{tikzpicture}
[->,>=stealth',shorten >=1pt,node distance=1.5cm,auto]
\tikzstyle{main node}=[circle, draw, align=center];

\node[main node] (0) {$0$}; \node[main node] (1) [below left of=0]
{$1$}; \node[main node] (3) [below right of=0] {$3$}; \node[main
node] (2) [below of=1] {$2$}; \node[main node] (4) [below of=3]
{$4$}; \node[main node] (5) [below right of=2] {$5$};

\path (0) edge [in=120,out=60,loop] node {} (0)
    edge node {} (1)
(1) edge node {} (2)
    edge [bend right=10] node {} (3)
(2) edge node {} (5)
    edge [bend right=10] node {} (4)
(3) edge node {} (0)
    edge [bend right=10] node {} (1)
(4) edge node {} (3)
    edge [bend right=10] node {} (2)
(5) edge node {} (4)
    edge [in=300,out=240,loop] node {} (5);
\end{tikzpicture}
&\;\;\;
\begin{tikzpicture}
[->,>=stealth',shorten >=1pt,node distance=1.5cm,auto]
\tikzstyle{main node}=[circle, draw, align=center];

\node[main node] (0) {$0$}; \node[main node] (1) [below left of=0]
{$1$}; \node[main node] (6) [below right of=0] {$6$}; \node[main
node] (3) [below right of=1] {$3$}; \node[main node] (7) [below
of=3] {$7$}; \node[main node] (2) [below left of=7] {$2$};
\node[main node] (9) [below right of=7] {$9$}; \node[main node] (4)
[below right of=2] {$4$}; \node[main node] (8) [below of=4] {$8$};
\node[main node] (5) [below left of=8] {$5$}; \node[main node] (10)
[below right of=8] {$10$}; \node[main node] (11) [below right of=5]
{$11$};

\path (0) edge [in=120,out=60,loop] node {} (0)
    edge node {} (1)
(1) edge node {} (2)
    edge node {} (3)
(2) edge node {} (4)
    edge node {} (5)
(3) edge node {} (6)
    edge [bend right=10] node {} (7)
(4) edge [bend right=10] node {} (8)
    edge node {} (9)
(5) edge node {} (10)
    edge node {} (11)
(6) edge node {} (0)
    edge node {} (1)
(7) edge node {} (2)
    edge [bend right=10] node {} (3)
(8) edge [bend right=10] node {} (4)
    edge node {}  (5)
(9) edge node {} (6)
    edge node {} (7)
(10) edge node {}  (8)
     edge node {}  (9)
(11) edge node {} (10)
     edge [in=300,out=240,loop] node {} (11);
\end{tikzpicture}
&
\begin{tikzpicture}
    \node[shape=circle,draw=black] (3) at (0,0) {3};
    \node[shape=circle,draw=black] (0) at (-3,-1.5) {0};
    \node[shape=circle,draw=black] (9) at (3,-1.5) {9};
    \node[shape=circle,draw=black] (6) at (-1.5,-3) {6};
    \node[shape=circle,draw=black] (7) at (1.5,-3) {7};
    \node[shape=circle,draw=black] (8) at (-1.5,-4.5) {8};
    \node[shape=circle,draw=black] (2) at (1.5,-4.5) {2};
    \node[shape=circle,draw=black] (5) at (-3,-6) {5};
    \node[shape=circle,draw=black] (4) at (3,-6) {4};
    \node[shape=circle,draw=black] (1) at (0,-7.5) {1};

    \path [->] (3) edge [bend right=15] node {}  (0)
    edge [bend left=15] node {}  (9)
    edge [bend left=70] node {} (1);
    \path [->] (0) edge [in=120,out=60,loop] node {} (0)
                    edge [bend right=35] node {} (1)
                    edge [bend left=20] node {} (2);
    \path [->] (9) edge [in=60,out=120,loop] node {} (9)
                    edge node {} (7)
                    edge [bend right=20] node {} (8);
    \path [->] (6) edge [bend right=10] node {} (8)
                    edge node {} (0)
                    edge [bend left=15] node {} (9);
     \path [->] (7) edge [bend right=10] node {} (2)
                    edge node {} (3)
                    edge [bend left=40] node {} (1);
     \path [->] (8) edge [bend right=10] node {} (6)
                    edge node {} (4)
                    edge  node {} (5);
     \path [->] (2) edge [bend right=10] node {} (7)
                    edge [bend right=10] node {} (6)
                    edge [bend right=10] node {} (8);
     \path [->] (5) edge [in=240,out=300,loop] node {} (5)
                    edge [bend right=10] node {} (7)
                    edge [bend left=10] node {} (6);
     \path [->] (4) edge [in=300,out=240,loop] node {} (4)
                    edge node {} (2)
                    edge [bend right=20] node {} (3);
     \path [->] (1) edge [bend left=70] node {} (3)
                    edge [bend left=15] node {} (5)
                    edge [bend right=15] node {} (4);
    \end{tikzpicture}
\end{tabular}
\caption{generalized de Bruijn digraphs $G_B(2,6)$ (left),
$G_B(2,12)$ (middle), and $G_B(3,10)$ (right).}
\end{center}
\end{figure}

\begin{lemma}
Suppose that $d$ divides $N$, then for any vertex $y\in V$,
$\Gamma_y^-=\{\lfloor y/d\rfloor+t\cdot N/d: t=0,\ldots,d-1\}$.
\end{lemma}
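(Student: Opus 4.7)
The plan is to exhibit $d$ explicit predecessors of $y$ and then invoke the count from Lemma~\ref{L:regular} to conclude the two sets must coincide. Since $d\mid N$, we have $\gcd(d,N)=d$, so Lemma~\ref{L:regular} yields $|\Gamma_y^-|=d$. Thus once I produce $d$ pairwise distinct vertices in $\{0,1,\ldots,N-1\}$, each of which is a predecessor of $y$, and each of the claimed form, equality of sets follows for free.

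The key computation is direct. Write $y=dq+r$ with $q=\lfloor y/d\rfloor$ and $r\in\{0,1,\ldots,d-1\}$. For each $t\in\{0,\ldots,d-1\}$, set $x_t := q + t\cdot N/d$ (this is an integer because $d\mid N$). Then
\[
dx_t + r = dq + tN + r = y + tN \equiv y \pmod N,
\]
so $(x_t,y)\in E$ with the same residue $r$ working for every $t$. Hence every $x_t$ lies in $\Gamma_y^-$.

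It remains to check that the $x_t$ are legitimate vertex labels and pairwise distinct. Distinctness is immediate since the shifts $t\cdot N/d$ for $t=0,\ldots,d-1$ are $d$ distinct nonnegative integers smaller than $N$. For membership in $\{0,\ldots,N-1\}$, note that $y\leq N-1$ together with $d\mid N$ gives $q\leq N/d - 1$, so $x_t\leq (N/d-1)+(d-1)N/d = N-1$, while $x_t\geq 0$ is clear. Combining the $d$ distinct predecessors just produced with the equality $|\Gamma_y^-|=d$ from Lemma~\ref{L:regular} completes the argument. I do not anticipate any real obstacle: the only subtlety is remembering that the hypothesis $d\mid N$ is what both guarantees that $N/d$ is an integer (so the candidates $x_t$ are actual vertex labels) and forces $\gcd(d,N)=d$ (so the predecessor count is exactly $d$).
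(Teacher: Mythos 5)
Your proof is correct and follows essentially the same route as the paper's: exhibit $\lfloor y/d\rfloor+t\cdot N/d$ for $t=0,\ldots,d-1$ as $d$ distinct predecessors of $y$ and then invoke the $d$-regularity of Lemma~\ref{L:regular} to conclude there are no others. You are merely more explicit about the congruence computation and the range/distinctness checks (and the remark about $\gcd(d,N)=d$ is unnecessary, since Lemma~\ref{L:regular} holds for all $N$ and $d$), but the argument is the same.
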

\begin{proof}
Since $y/d-1<\lfloor y/d\rfloor\leq y/d$, we have
\[
y-d<d\lfloor y/d\rfloor\leq y,
\]
so that $y=d\lfloor y/d\rfloor+r$ for some $r\in\{0,\ldots,d-1\}$.
Hence, $\lfloor y/d\rfloor$ is a predecessor of $y$, and clearly so
is $\lfloor y/d\rfloor+t\cdot N/d$ for each $t=1,\ldots,d-1$. By
lemma~\ref{L:regular}, these are the only predecessors of $y$.
\end{proof}

If follows that, when $d$ divides $N$, for each vertex
$x\in\{0,\ldots,N/d-1\}$ and each $t=1,\ldots,d-1$,
$\Gamma_x^+=\Gamma_{x+\left(N/d\right)\cdot  t}^+$. We formulate
this in the following form that is more usable below.

\begin{lemma}\label{L:Successor}
Suppose that $d$ divides $N$. Let $y_1$ and $y_2$ be two successors
of $x_1$. Suppose that $y_1$ is the successor of some $x_2\neq x_1$.
Then $y_2$ is also a successor of $x_2$.
\end{lemma}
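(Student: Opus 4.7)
The plan is to derive this as a direct consequence of the preceding predecessor formula together with the observation (stated just before the lemma) that vertices differing by a multiple of $N/d$ have identical successor sets.

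First I would reinterpret the hypothesis using the previous lemma. Since $y_1$ is a successor of both $x_1$ and $x_2$, both $x_1$ and $x_2$ lie in $\Gamma_{y_1}^- = \{\lfloor y_1/d\rfloor + t\cdot N/d : t = 0,\ldots,d-1\}$. Hence there exist $t_1, t_2 \in \{0,\ldots,d-1\}$ with $x_1 = \lfloor y_1/d\rfloor + t_1\cdot N/d \pmod N$ and $x_2 = \lfloor y_1/d\rfloor + t_2\cdot N/d \pmod N$, so $x_2 - x_1 \equiv (t_2 - t_1)\cdot N/d \pmod N$; that is, $x_1$ and $x_2$ are congruent modulo $N/d$.

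Next I would apply the remark preceding the lemma: any two vertices in the same residue class modulo $N/d$ have the same successor set, since $\Gamma_x^+ = \Gamma_{x + (N/d)\cdot t}^+$ for every $t$. In particular $\Gamma_{x_1}^+ = \Gamma_{x_2}^+$. Since $y_2 \in \Gamma_{x_1}^+$ by hypothesis, we conclude $y_2 \in \Gamma_{x_2}^+$, which is the desired statement.

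There is no serious obstacle; the lemma is essentially a repackaging of the previous one. The only small point to verify cleanly is that the equivalence ``common predecessor of some vertex'' $\Longleftrightarrow$ ``equal modulo $N/d$'' really does go both ways, which follows because the preceding lemma exhibits $\Gamma_y^-$ explicitly as a full coset of the subgroup $\{0, N/d, \ldots, (d-1)N/d\}$ of $\mathbb{Z}/N\mathbb{Z}$, and this subgroup has exactly $d$ elements, matching $|\Gamma_y^-| = d$ from Lemma~\ref{L:regular}.
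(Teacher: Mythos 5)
Your proposal is correct and follows exactly the route the paper intends: the paper states Lemma~\ref{L:Successor} without a separate proof, presenting it as a reformulation of the immediately preceding observation that $\Gamma_x^+=\Gamma_{x+(N/d)\cdot t}^+$, which in turn rests on the explicit description of $\Gamma_y^-$ as a coset of $\{0,N/d,\ldots,(d-1)N/d\}$. You have simply made explicit the two steps the paper leaves implicit (that a common successor forces $x_1\equiv x_2 \pmod{N/d}$, and that such vertices have identical successor sets), so there is nothing to correct.
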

 The reason we require that $d$ divides $N$ in the above two lemmas
 is precisely to require any two conjugate vertices $x_1$ and $x_2$
 to be \textit{completely conjugates}, in the sense that
 $\Gamma_{x_1}^+=\Gamma_{x_2}^+$. In fact, when $d$ does not divide
 $N$, the conjugacy relation is not transitive. That is, it is
 possible to find vertices $x_1,x_2,x_3$ such that $x_1,x_2$ is a
 conjugate pair, $x_2,x_3$ is a conjugate pair but $x_1,x_3$
 is not a conjugate pair. As an illustration, we list in
 Table~\ref{T:edges} all the edges of $G_B(N,d)$ for $d=4$ and three values of $N$
 with different divisibility conditions. The edges are listed in the
 form $x\rightarrow y_1,\ldots,y_{d-1}$, meaning that $(x,y_i)$ is
 an edge for each $i$. In Case (a) note that $0$ and $2$ are
 conjugates (as they both have an edge to $0$ and $1$), $2$ and $4$ are
 conjugates but $0$ and $4$ are not conjugates. Similar intransitive
 vertices can be found in (b). Only in (c), where $d|N$, conjugate
 vertices form an equivalent class and so they are completely
 conjugate.

 \begin{table}
 \begin{tabular}{c|c|c}
 \begin{tabular}{lccccc}
 0 & $\rightarrow$ & 0, & 1, & 2, & 3\\
 1 & $\rightarrow$ & 4, & 5, & 6, & 7\\
 2 & $\rightarrow$ & 8, & 9, & 0, & 1\\
 3 & $\rightarrow$ & 2, & 3, & 4, & 5\\
 4 & $\rightarrow$ & 6, & 7, & 8, & 9\\
 5 & $\rightarrow$ & 0, & 1, & 2, & 3\\
 6 & $\rightarrow$ & 4, & 5, & 6, & 7\\
 7 & $\rightarrow$ & 8, & 9, & 0, & 1\\
 8 & $\rightarrow$ & 2, & 3, & 4, & 5\\
 9 & $\rightarrow$ & 6, & 7, & 8, & 9\\
 \end{tabular}
 &
\begin{tabular}{lccccc}
 0 & $\rightarrow$ & 0, & 1, & 2, & 3\\
 1 & $\rightarrow$ & 4, & 5, & 6, & 7\\
 2 & $\rightarrow$ & 8, & 9, & 10, & 0\\
 3 & $\rightarrow$ & 1, &  2, & 3, & 4\\
 4 & $\rightarrow$ & 5, & 6, & 7, & 8\\
 5 & $\rightarrow$ & 9, & 10, & 0, & 1\\
 6 & $\rightarrow$ & 2, & 3, & 4, & 5\\
 7 & $\rightarrow$ & 6, & 7, & 8, & 9\\
 8 & $\rightarrow$ & 10, & 0, & 1, & 2\\
 9 & $\rightarrow$ & 3,  & 4, & 5, & 6\\
 10 & $\rightarrow$ & 7, & 8, & 9, & 10\\
 \end{tabular}
 &
 \begin{tabular}{lccccc}
 0 & $\rightarrow$ & 0, & 1, & 2, & 3\\
 1 & $\rightarrow$ & 4, & 5, & 6, & 7\\
 2 & $\rightarrow$ & 8, & 9, & 10, & 11\\
 3 & $\rightarrow$ & 0, & 1, & 2, & 3\\
 4 & $\rightarrow$ & 4, & 5, & 6, & 7\\
 5 & $\rightarrow$ & 8, & 9, & 10, & 11\\
 6 & $\rightarrow$ & 0, & 1, & 2, & 3\\
 7 & $\rightarrow$ & 4, & 5, & 6, & 7\\
 8 & $\rightarrow$ & 8, & 9, & 10, & 11\\
 9 & $\rightarrow$ & 0, & 1, & 2, & 3\\
10 & $\rightarrow$ & 4, & 5, & 6, & 7\\
11 & $\rightarrow$ & 8, & 9, & 10, & 11\\
 \end{tabular}\\
 \hline (a) $N=10$, $d=4$ & (b) $N=11$, $d=4$ & (c) $N=12$, $d=4$
 \end{tabular}
\caption{}\label{T:edges}
 \end{table}

\section{Connectedness of $\mathcal{C}(N,d)$}\label{S:connectedness}
In this section we formulate and prove our main result. We will
first define a metric distance between de Bruijn cycles. To this end
let us align de Bruijn cycles as finite sequences that all start
with the same vertex. Without loss of generality we choose this
initial vertex to be $0$. In the following, vertices of the
cross-join graph, i.e., de Bruijn sequences, are denoted by $u$,
$v$, etc. while vertices of $G_B(N,d)$ are denoted by $x$, $y$, etc.

\begin{definition}\label{D:distance}
Let $u$ and $v$ be two vertices of $\mathcal{C}(N,d)$ aligned as in
the previous paragraph. The distance $\mathcal{D}(u,v)$ is defined
as $N-L$, where $L$ is the length of the longest initial path that
is common to $u$ and $v$.
\end{definition}

It is straightforward to verify that $\mathcal{D}(u,v)$ satisfies
the three axioms of a distance. Also, it is essential to keep in
mind that this distance is not related to the distance defined by
the graph adjacency on $\mathcal{C}(N,d)$, (i.e. the number of
vertices in the shortest path across edges of the graph between two
vertices). As an example, for $d=3$ and $N=10$, the following are
two de Bruijn sequences aligned to start at $0$, with the last $0$
repeated to stress that the sequences cycle back to the initial
vertex.

$u=(0,2,7,1,5,6,9,8,4,3,0)$

$v=(0,2,7,1,4,3,9,8,5,6,0)$

The maximum common initial path is $(0,2,7,1)$ so
$\mathcal{D}(u,v)=10-4=6$. The following lemma is fundamental for
the rest of the paper.

\begin{lemma}\label{L:fundamental}
Let $u$ and $v$ be two distinct de Bruijn sequences in $G_B(N,d)$
where $d$ divides $N$. Then there exists a de Bruijn sequence $u_1$
which is a neighbor of $u$ in $\mathcal{C}(N,d)$ such that
$\mathcal{D}(u_1,v)<\mathcal{D}(u,v)$.
\end{lemma}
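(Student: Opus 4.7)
The plan is to construct $u_1$ by an explicit cross-join whose cross pair is dictated by the first position where $u$ and $v$ differ, and whose join pair is then chosen so as to preserve the common initial segment. Align $u$ and $v$ to start at $0$, and write $u = (x_0, x_1, \ldots, x_{L-1}, y_L, \ldots)$ and $v = (x_0, x_1, \ldots, x_{L-1}, z_L, \ldots)$, where $L$ is the length of the longest common initial path and $y_L \neq z_L$. Both $y_L$ and $z_L$ are successors of $x_{L-1}$. Let $w$ be the unique predecessor of $z_L$ in $u$; then $w \neq x_{L-1}$, and because $d\mid N$, Lemma~\ref{L:Successor} forces $w$ to be completely conjugate to $x_{L-1}$, so $(x_{L-1},w)$ is a valid conjugate pair.

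I then take $(x_{L-1}, w)$ as the cross pair and interchange their successors, so now $x_{L-1}\to z_L$ and $w\to y_L$. The cycle $u$ splits into two cycles $C_1$ (containing $0$) and $C_2$ (containing $w$ and $y_L$); tracing the new edges shows that $C_1$ begins $0, x_1, \ldots, x_{L-1}, z_L, \ldots$, so $C_1$ already agrees with $v$ on its first $L+1$ vertices. For the join I would pick a conjugate pair $(p, p')$ with $p \in S := C_1\setminus\{x_0,\ldots,x_{L-1}\}$ and $p' \in C_2$, and swap their successors. A short check (using that $w\notin\{x_0,\ldots,x_{L-1}\}$, since otherwise $z_L$ would already be forced onto the common initial path) ensures that none of the four altered vertices $\{x_{L-1}, w, p, p'\}$ lies in $\{x_0,\ldots,x_{L-2}\}$; therefore, tracing $u_1$ from $0$, the first altered vertex encountered is $x_{L-1}$ itself, giving $u_1 = (0, x_1, \ldots, x_{L-1}, z_L, \ldots)$ and consequently $\mathcal{D}(u_1, v) \le N - (L+1) < \mathcal{D}(u, v)$.

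The crux of the proof, and the step I expect to be the main obstacle, is showing that a join pair $(p,p')$ with $p\in S$ and $p'\in C_2$ actually exists, i.e., that some conjugate class contains both a vertex of $S$ and a vertex of $C_2$. I would argue this by contradiction: suppose every conjugate class meeting $C_2$ has its $C_1$-portion contained entirely in the short segment $\{x_0,\ldots,x_{L-1}\}$. Since each conjugate class has exactly $d$ elements and that segment has only $L$ of them, the assumption yields a rigid accounting of the $|C_2|$ vertices in terms of at most $L$ anchor vertices. Combining this count with the structural facts that $C_2$ is a simple cycle in $G_B(N,d)$ (so at each $v\in C_2$ the $C_2$-predecessor lies in a $d$-element conjugate class whose other members must, by hypothesis, all lie in $\{x_0,\ldots,x_{L-1}\}\cup C_2$) and that $C_1$'s closing predecessor of $0$ must come from the restricted set $\{0, N/d, \ldots, (d-1)N/d\}$, one should derive a contradiction, ruling out degenerate configurations such as $|S|=1$. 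Making this counting/structural step precise is the delicate heart of the argument; the rest reduces to routine bookkeeping about how the cross splits and the join rejoins the cycles.
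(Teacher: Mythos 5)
Your cross step is exactly the paper's: the predecessor $w$ of $z_L$ in $u$ is not on the common prefix, it is conjugate to $x_{L-1}$ because $d\mid N$ makes the predecessor set of $z_L$ a full conjugate class, and swapping successors splits $u$ into $C_1$ (containing $0$ and now agreeing with $v$ for at least $L+1$ vertices) and $C_2$. Your observation that any join pair $(p,p')$ with $p\in S=C_1\setminus\{x_0,\dots,x_{L-1}\}$ and $p'\in C_2$ finishes the proof is also correct. But the existence of such a pair --- which you rightly flag as the crux --- is precisely the content of the paper's Claim~1, and your proposal does not prove it. The counting argument you sketch cannot work as stated: it makes no use of $v$ beyond fixing the number $L$, whereas the statement to be proved genuinely depends on $v$. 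Cardinality considerations alone give no contradiction; for instance, nothing in your accounting rules out the configuration in which every conjugate class meeting $C_2$ is entirely contained in $\{x_0,\dots,x_{L-1}\}\cup C_2$ (indeed a class could simply lie wholly inside $C_2$), and the constraint that the predecessors of $0$ form the class $\{0,N/d,\dots,(d-1)N/d\}$ does not by itself clash with any of this.

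What the paper does instead is an iterative, structural argument. Assuming no valid join pair exists, the successor of the current prefix endpoint \emph{in $v$} must lie on $C_1$ but off the prefix, so one can perform another conjugate swap inside $C_1$ that strictly lengthens the prefix shared with $v$; iterating until this is no longer possible yields a cycle $C_k$ whose prefix $M_k$ closes directly by an edge $(x_{L_k},0)$ (Claim~2) and which contains \emph{all} predecessors of $0$ (Claim~3), both proved by exhibiting a forbidden join or a further extension otherwise. The contradiction is then that $M_k$ is an initial path of the de Bruijn cycle $v$, yet $v$ cannot return to $0$ without reusing a predecessor of $0$ already consumed inside $M_k$. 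Some argument of this kind, driven by $v$, is unavoidable; to complete your proof you would need to supply it, and at that point you would essentially be reproducing the paper's Claims 1--3 rather than a shorter counting shortcut.
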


We prove this lemma after we state and prove our main result.

\begin{theorem}
When $d$ divides $N$, the cross-join graph $\mathcal{C}(N,d)$ is
connected.
\end{theorem}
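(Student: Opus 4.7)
The plan is to deduce the theorem directly from Lemma~\ref{L:fundamental} by an inductive descent on the distance $\mathcal{D}$. Given any two de~Bruijn cycles $u$ and $v$ in $G_B(N,d)$, the goal is to exhibit a finite walk $u = u_0, u_1, \ldots, u_k = v$ whose consecutive pairs are edges of $\mathcal{C}(N,d)$, i.e.\ a chain of cross-join operations carrying $u$ into $v$.

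First I would record the elementary observation that $\mathcal{D}$, as set up in Definition~\ref{D:distance}, is a nonnegative integer-valued function, and that the normalization of aligning every de~Bruijn cycle to start at vertex $0$ forces $\mathcal{D}(u,v) = 0$ if and only if $u = v$: a common initial path of length $N$ visits all vertices in the same order and therefore determines the cycle completely.

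With this in hand the main step is a greedy descent. Set $u_0 := u$; as long as $u_i \neq v$, invoke Lemma~\ref{L:fundamental} to produce a neighbor $u_{i+1}$ of $u_i$ in $\mathcal{C}(N,d)$ satisfying $\mathcal{D}(u_{i+1},v) < \mathcal{D}(u_i,v)$. Since the sequence $\mathcal{D}(u_i,v)$ is strictly decreasing in $\mathbb{Z}_{\geq 0}$, it must reach $0$ in at most $\mathcal{D}(u,v) < N$ steps at some index $k$; at that point $u_k = v$, and the walk $u_0, u_1, \ldots, u_k$ is the desired path in $\mathcal{C}(N,d)$, proving connectedness. All the real work is concentrated in the fundamental lemma itself; once that is granted, no further obstacle stands in the way of the theorem, and the only subtlety to verify is that the alignment convention makes $\mathcal{D}$ behave as a genuine distance with $u = v$ as its unique zero, which is immediate from Definition~\ref{D:distance}.
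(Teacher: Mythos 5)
Your proposal is correct and follows essentially the same argument as the paper: iterate Lemma~\ref{L:fundamental} to obtain a strictly decreasing sequence of distances $\mathcal{D}(u_i,v)$ in $\mathbb{Z}_{\geq 0}$, which must terminate at $v$ after finitely many steps. Your explicit check that $\mathcal{D}(u,v)=0$ forces $u=v$ under the alignment convention is a small but welcome addition that the paper leaves implicit.
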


\begin{proof}
Let $u$ and $v$ be two distinct vertices in $\mathcal{C}(N,d)$. By
Lemma~\ref{L:fundamental} $u$ has a neighbor $u_1$ on
$\mathcal{C}(N,d)$ such that $\mathcal{D}(u_1,v)<\mathcal{D}(u,v)$.
If $u_1=v$ then we are done, otherwise the same argument can be
iterated to get a vertex $u_2$, which is a neighbor of $u_1$, with
$\mathcal{D}(u_2,v)<\mathcal{D}(u_1,v)$. Due to the strict
inequality, and since the number of vertices of $\mathcal{C}(N,d)$
is finite, it is evident that this iterative process must end at $v$
after a finite number $m$, leading to the desired path
$u_0=u,u_1,\ldots,u_m=v$.
\end{proof}

\begin{proof}[Proof of Lemma~\ref{L:fundamental}]
As a road map, we are going to state and prove three claims within
the proof, the main one is Claim~1, while Claim~2 and Claim~3 are
stated and proved within the proof of Claim~1.

Let $M_0$ be the maximal common initial sequence of $u$ and $v$.
That is, suppose that the sequence
\[
M_0: 0=x_1\rightarrow x_2\rightarrow\cdots\rightarrow x_{L_0}
\]
is common to $u$ and $v$ and $L_0$ is maximal. Since $u\neq v$,
$L_0<N$ and so the successors of $x_{L_0}$ in $u$ and $v$ are both
distinct from $0$. Let us refer to these successors respectively as
$x^{(1)}$ and $x_{L_0+1}$. Since $u$ is a de Bruijn sequence, it
contains every vertex of $G_B(N,d)$ so it must contain $x_{L_0+1}$.
The latter is evidently one of the vertices of $\bar{M}_0$, the
complement of $M_0$ in $u$; that is, the sub-path of $u$ that starts
with $x^{(1)}$ and ends with the vertex just before $0$. Let
$^{*}x_0$ be the predecessor of $x_{L_0+1}$ in $u$. Since
$x_{L_0+1}$ belongs to $\bar{M}_0$, the vertex $^{*}x_0$ is either
in $\bar{M}_0$ or it is $x_{L_0}$ itself. But the latter is not
possible because otherwise the common initial path of $u$ and $v$
would extend to $x_{L_0+1}$, defying the maximality of $M_0$. Now
$x_{L_0}$ and $^{*}x_0$ are predecessors of the same vertex. They
must be conjugate vertices by Lemma~\ref{L:Successor}.  Swapping
their successors we split $u$ into two cycles, a cycle $C_1$ that
includes the vertex $0$ and another cycle $\tilde{C}_1$ that
includes the edge $^{*}x_0\rightarrow x^{(1)}$.

The cycle $C_1$, aligned to start at $0$, and the de Bruijn cycle
$v$ have a maximal common initial sequence
\[
M_1: 0=x_1\rightarrow x_2\rightarrow\cdots\rightarrow
x_{L_0}\rightarrow\cdots\rightarrow x_{L_1}
\]
where $L_1\geq L_0+1$. The rest of the proof depends on establishing
the following

Claim 1: It is possible to join $C_1$ and $\tilde{C}_1$ by using
vertices in $\bar{M}_1$, the complement of $M_1$ in $C_1$.

To show this suppose we cannot. Then let the successors of $x_{L_1}$
in $v$ and $C_1$ be $x_{L_1+1}$ and $x^{(2)}$ respectively.
Obviously, $x_{L_1+1}$ is not on the path $M_1$. Since every
possible vertex is either on $C_1$ or on $\tilde{C}_1$, it follows
that $x_{L_1+1}$ is on $\bar{M}_1$, the complement of $M_1$ in
$C_1$, as it cannot be on $\tilde{C}_1$, by our assumption. Let
$^{*}x_1$ be the predecessor of $x_{L_1+1}$ in $C_1$. Similarly to
the previous paragraph, we can argue that $^{*}x_1$ is in
$\bar{M}_1$.

Interchanging the successors of $x_{L_1}$ and $^{*}x_1$ we further
split the cycle $C_1$ into two cycles $C_2$ and $\tilde{C}_2$ with
$C_2$ being the cycle that includes $0$ and shares a larger still
initial path with $v$, say,
\[
M_2: 0=x_1\rightarrow x_2\rightarrow\cdots\rightarrow
x_{L_2},L_2>L_1.
\]
In essence, this process can be iterated, arranging and re-arranging
vertices on the initial cycle $C_1$ but without using vertices from
$\tilde{C}_1$, only a finite number of times. Let $k$ be the maximal
number of iterations and let $C_k$ be the resulting cycle that
includes the vertex $0$ with maximal initial path
\[
M_k: 0=x_1\rightarrow x_2\rightarrow\cdots\rightarrow x_{L_k},
L_k>L_{k-1}
\]
that is common with the de Bruijn sequence $v$.\vspace{8pt}

Claim 2: The sub-path of the cycle $C_k$ that begins with $x_{L_k}$
and ends with $0$ is simply an edge $(x_{L_k},0)$. That is, there is
no vertex in $C_k$ between $x_{L_k}$ and $0$.

To see this, suppose that $x^{(k+1)}\neq0$ is the successor of
$x_{L_k}$ in $C_k$. Let $x_{L_k+1}$ be the successor of $x_{L_k}$ in
$v$, so that $x_{L_k+1}$ and $x^{(k+1)}$ are companion vertices. We
then see that $x_{L_k+1}\neq0$ since otherwise the de Bruijn cycle
$v$ would be shorter than $C_k$. Since $C_1$ and $\tilde{C}_1$
include all vertices, $x_{L_k+1}$ is either in $\tilde{C}_1$ or it
is in the part $\bar{M}_1$ of $C_1$. If the first case is true,
swapping the predecessor of $x_{L_k+1}$ in $\tilde{C}_1$ with the
predecessor of $x^{(k+1)}$ (which is evidently one of the vertices
of $\bar{M}_1$) shows that $C_1$ and $\tilde{C}_1$ can be joined
into a de Bruijn sequence using a vertex outside $M_1$,
contradicting the original assumption of Claim 1.

If the second case is true, that is, if $x_{L_k+1}$ belongs to
$\bar{M}_k$ or any of the cycles made by the previous iteration and
that are at most  $C_2,\ldots,C_{k-1},
\tilde{C}_2,\ldots,\tilde{C}_{k-1}$ (equivalently, if it is one of
the vertices of $\bar{M}_1$), then we can swap the predecessors of
$x^{(k+1)}$ and $x_{L_k+1}$ to get yet another cycle $C_{k+1}$ that
shares a longer initial segment with $v$, contradicting the
maximality of $C_k$.

It follows that $x_{L_k}$ is a predecessor of $0$. We next prove

Claim 3: $C_k$ includes all predecessors of $0$.\vspace{8pt}

We prove this claim in a way similar to the proof of the previous
claim. In effect, suppose that $y$ is a predecessor of $0$ that is
not on $C_k$. If $y$ belongs to $\tilde{C}_1$ we get a contradiction
because we could have joined $C_1$ and $\tilde{C}_1$ by swapping the
successors of $y$ and $x_{L_k}$ (which is on $\bar{M}_1$). Likewise,
the presence of $y$ on any of the intermediate cycles
$C_2,\ldots,C_{k-1}, \tilde{C}_2,\ldots,\tilde{C}_{k-1}$ contradicts
the maximality of $k$.

The validity of this last claim means that the sequence $M_k$ cannot
be continued into a de Bruijn sequence as it cannot cycle back to
$0$ without using one of the predecessors of $0$ for a second time.
This of course is not true because $M_k$ is already an initial path
of the de Bruijn sequence $v$.

We have thus proven that $C_1$ and $\tilde{C}_1$ can be joined by
swapping the successor of a vertex in $\bar{M}_1$ with that of a
conjugate vertex in $\tilde{C}_1$. This makes a new de Bruijn
sequence $u_1$ which is a neighbor of $u$ on $\mathcal{C}(N,d)$.
Since $L_0<L_1$, $N-L_1<N-L_0$ and $u_1$ satisfies the inequality
\[
\mathcal{D}(u_1,v)<\mathcal{D}(u,v)
\]
as desired.
\end{proof}

\section{Hamiltonicity of $\mathcal{C}(N,d)$}\label{S:hamiltonicity}
The next natural question is whether or not the cross-join graph
admits a Hamiltonian path or cycle. There is no simple answer as we
don't have information about the number of cross-join neighbors of
each vertex/sequence, so none of the known sufficient conditions for
Hamiltonicity can be applied. It turns out that the principle upon
which Lemma~\ref{L:fundamental} is based can be used again to show
the existence of a Hamiltonian path by actually generating one.
Namely, that we can cross-join a sequence $u$ into another sequence
by keeping the longest possible initial subsequence of $u$
unchanged. The following algorithm describes the steps.\newline

ALGORITHM H.

Input: A de Bruijn sequence $u_1=(x_1=0,x_2,\ldots,x_N)$

Output: A Hamiltonian path along the cross-join graph
$\mathcal{C}(N,d)$


1. For the current sequence $u_k$, $k\geq1$ form the set

$$A_k=\{(i,i^{\prime}): i<i^{\prime} \text{ and } (x_i,x_{i^{\prime}}) \text{ is a conjugate pair}\}$$

2. If $A_k$ is empty, output $u_k$ and halt

3. Locate the lexicographically largest pair $(i,i^{\prime})$ in
$A_k$

4. Form the set

$$B_k(i,i^{\prime})=\{(j,j^{\prime}): i+1\leq j^{\prime}\leq i^{\prime}<j \text{ and }  (x_j,x_{j^{\prime}}) \text{ is a conjugate pair}\}$$

5. If $B_k(i,i^{\prime})$ is empty, delete $(i,i^{\prime})$ from
$A_k$ and go to Step (2)

6.  Locate the lexicographically largest pair $(j,j^{\prime})$ in
$B_k(i,i^{\prime})$

7. Form the cross-joined sequence $u^{\prime}$ based on the
cross-join pairs $(x_i,x_{i^{\prime}})$ and $(x_j,x_{j^{\prime}})$:

$s^{\prime}=(x_1,\ldots,x_i,x_{i^{\prime}+1},\ldots,x_j,x_{j^{\prime}+1},\ldots,x_{i^{\prime}},x_{i+1},\ldots,x_{j^{\prime}},x_{j+1},\ldots,x_N)$

8. If $u^{\prime}$ occurred earlier as an output sequence, delete
$(j,j^{\prime})$ from $B_k(i,i^{\prime})$ and go to Step (5)

9. Output $u_k$

10. Let $k=k+1$

11. Let the current sequence $u_k$ be $u^{\prime}$ and go to Step
(1)

\begin{remark}
The sets $A_k$ and $B_k(i,i^{\prime})$ need not be evaluated a
priori, but they simply make the presentation easier. Also, the
choice of $(j,j^{\prime})$ as the largest is not strictly needed for
Hamiltonicity. It is included to provide a definitive way of
choosing the next neighbor. In fact, as will be seen in the proof of
correctness next, the essential part is that the cross-join neighbor
of a current state $u_k$ that shares the highest initial subsequence
with $u_k$ will be tried first. This neighbor is accepted if it is a
new output, otherwise the next largest $(i,i^{\prime})$ is tried,
noting that (for $d>2$) $i$ may remain the same as before while
$i^{\prime}$ is lowered. That is, the next lower pair in
lexicographical order is tried.
\end{remark}

\begin{proof}[Proof of Correctness of Algorithm H]

Let $u_1,\ldots,u_K$ be the total output sequence. For the sake of
getting a contradiction, suppose that $v$ is a de Bruijn sequence
that is not in the output.  Let $k_0$ be the maximal index between
$1$ and $K$ such that $u_{k_0}$ shares the longest possible initial
subsequence with $v$. Let $l$ be the length of this common largest
initial subsequence. By Lemma~\ref{L:fundamental}, there exists a de
Bruijn sequence $\hat{v}$ that is a cross-join neighbor of $u_{k_0}$
such that $\mathcal{D}(\hat{v},v)<\mathcal{D}(u_{k_0},v)$, where
$\mathcal{D}$ is as given in Definition~\ref{D:distance}. Recalling
the proof of Lemma~\ref{L:fundamental} this implies that

(a) $\hat{v}$ shares with $u_{k_0}$ the same initial subsequence of length $l$, and

(b) it shares an initial subsequence with $v$ that is longer than $l$.

By (b), it follows that $\hat{v}$ could not have appeared earlier in
the output, as this contradicts the definition of $k_0$. This in
turn implies that $u_{k_0}$ is not the last output sequence (i.e.
$k_0<K$), for otherwise it has at least one neighbor that did not
previously appear ($\hat{v}$ constitutes one such neighbor).  Thus
let $i$ be the length of the initial subsequence that is common
between $u_{k_0}$ and $u_{k_0+1}$ and let us consider two cases:

Case 1: $l\leq i$. Then $u_{k_0+1}$ shares with $v$ a common initial
subsequence of length $l$. But this contradicts the maximality of
$k_0$.

Case 2: $l>i$. Then $\hat{v}$ is a neighbor of $u_{k_0}$ that leaves
the initial subsequence of length $l$ unchanged. This says that
there must be an index $l^{\prime}$ such that the entries of
$u_{k_0}$ at $l$ and $l^{\prime}$ are conjugate a pair that leads to
a cross-join neighbor that was not the previous output. This
contradicts the choice of $(i,i^{\prime})$ as the lexicographically
largest such pair.

\noindent The two contradictions show that the output is a Hamiltonian path.
\end{proof}

Table~\ref{T:Hamil16} displays two Hamiltonian paths for the case
when $N=16$ and $d=2$. The one on the left is the result of
Algorithm H, while the one on the right uses a version of Algorithm
H that uses the lexicographically smallest $(j,j^{\prime})$. As a
first guess, we used the prefer-one sequence, see \cite{fred1982},
as a starting sequence but none of the paths, for $N=16$, come up to
be a Hamiltonian cycle. However, we do get a Hamiltonian cycle when
we apply Algorithm H with any of the sequences (2), (3), (5), (6),
(9), (10), (11), (12), where the labels are as in
Table~\ref{T:Hamil16}. By an extensive search we also find that the
de Bruijn sequence

\noindent{\small 0,  1,  3,  7, 15, 31, 30, 28, 24, 17,  2,  5, 10, 21, 11, 23, 14, 29, 26, 20,  9, 19,  6, 13, 27, 22, 12, 25, 18,  4,  8, 16}

\noindent serves as an initial sequence of Hamiltonian cycle for
$N=32$ and $d=2$.

For $d=2$ and  $N=4,6,8,10,12,14,18,20,22,24,26, 28,30$, the algorithm can easily locate
an initial de Bruijn sequence that initiates a Hamiltonian cycle. An implementation in Rstudio is given at the end of this paper.

\begin{table}
\center \tiny \noindent\begin{tabular}{l|l}

1) $\;\, 0, 1, 3, 7, 15, 14, \overline{13}, 11, 6, \underline{12},
9, 2, \overline{5}, 10, \underline{4}, 8$ & 1) $\;\, 0, 1, 3, 7, 15,
14, \overline{13}, 11, 6, 12, 9, \underline{2}, \overline{5},
\underline{10},
4, 8$\\

2) $ \;\,0, 1, 3, 7, 15, 14, 13, \overline{10}, \underline{4}, 9,
\overline{2}, 5, 11, 6,
\underline{12}, 8$ & 3) $ \;\,0, 1, 3, 7, 15, 14, 13, \overline{10}, 5, 11, 6, \underline{12}, 9, \overline{2}, \underline{4}, 8$\\

3) $ \;\,0, 1, 3, 7, 15, \overline{14}, 13, \underline{10}, 5, 11,
\overline{6}, 12, 9, \underline{2}, 4, 8$ & 2) $ \;\,0, 1, 3, 7, 15,\textbf{}
\overline{14}, 13, 10, \underline{4}, 9, 2, 5, 11, \overline{6},
\underline{12}, 8$\\

4) $ \;\,0, 1, \overline{3}, 7, 15, 14, \underline{12}, 9, 2, 5, \overline{11}, 6, 13, 10, \underline{4}, 8$
& 4) $ \;\,0, 1, \overline{3}, 7, 15, \underline{14}, 12, 9, 2, 5, \overline{11}, \underline{6}, 13, 10, 4, 8$\\

5) $ \;\,0, 1, 3, 6, 13, \overline{10}, \underline{4}, 9, \overline{2}, 5, 11, 7, 15, 14, \underline{12}, 8$
& 8) $ \;\,0, 1, 3, \overline{6}, 12, 9, 2, \underline{5}, 11, 7, 15, \overline{14}, \underline{13}, 10, 4, 8$\\

6) $ \;\,0, 1, 3, 6, \overline{13}, \underline{10}, \overline{5}, 11, 7, 15, 14, 12, 9, \underline{2}, 4, 8$
& 7) $ \;\,0, 1, 3, 6, \overline{13}, 11, 7, 15, 14, 12, 9, \underline{2}, \overline{5}, \underline{10}, 4, 8$\\

7) $ \;\,0, 1, 3, \overline{6}, \underline{13}, 11, 7, 15, \overline{14}, 12, 9, 2, \underline{5}, 10, 4, 8$
& 6) $ \;\,0, 1, 3, 6, 13,\overline{ 10}, 5, 11, 7, 15, 14, \underline{12}, 9, \overline{2}, \underline{4}, 8$\\

8) $ \;\,0, \overline{1}, 3, 6, \underline{12}, \overline{9}, 2, 5, 11, 7, 15, 14, 13, 10, \underline{4}, 8$
& 5) $ \;\,0, \overline{1}, 3, 6, 13, \underline{10}, 4, \overline{9}, \underline{2}, 5, 11, 7, 15, 14, 12, 8$\\

9) $ \;\,0, 1, 2, 5, 11, 7, 15, \overline{14}, 13, 10, \underline{4}, 9, 3, \overline{6}, \underline{12}, 8$
& 15) $0, 1, 2, 4, 9, \overline{3}, \underline{6}, 13, 10, 5, \overline{11}, 7, 15, \underline{14}, 12, 8$\\

10) $0, 1, 2, 5, \overline{11}, 7, 15, 14, \underline{12}, 9, \overline{3}, 6, 13, 10, \underline{4}, 8$
& 16) $0, 1, \overline{2}, 4, 9, 3, 7, 15, 14, \underline{13}, \overline{10}, \underline{5}, 11, 6, 12, 8$\\

11) $0, 1, 2, 5, 11, \overline{6}, 13, 10, \underline{4}, 9, 3, 7, 15, \overline{14}, \underline{12}, 8$
& 13) $0, 1, 2, 5, 10, 4, 9, \overline{3}, 7, 15, \underline{14}, 13, \overline{11}, \underline{6}, 12, 8$\\

12) $0, 1, 2, \overline{5}, 11, 6, \underline{12}, 9, 3, 7, 15, 14, \overline{13}, 10, \underline{4}, 8$
& 14) $0, 1, 2, \overline{5}, 10, 4, 9, \underline{3}, 6, \overline{13}, \underline{11}, 7, 15, 14, 12, 8$\\

13) $0, 1, 2, 5, 10, 4, 9, \overline{3}, 7, 15, \underline{14}, 13, \overline{11}, \underline{6}, 12, 8$
& 11) $0, 1, 2, 5, 11, \overline{6}, 13, 10, \underline{4}, 9, 3, 7, 15, \overline{14}, \underline{12}, 8$\\

14) $0, 1, \overline{2}, \underline{5}, \overline{10}, 4, 9, 3, 6, \underline{13}, 11, 7, 15, 14, 12, 8$
& 12) $0, 1, 2, 5, \overline{11}, \underline{6}, 12, 9, \overline{3}, 7, 15, \underline{14}, 13, 10, 4, 8$\\

15) $0, 1, 2, 4, 9, \overline{3}, \underline{6}, 13, 10, 5, \overline{11}, 7, 15, \underline{14}, 12, 8$
& 10) $0, 1, 2, 5, 11, 7, 15, \overline{14}, \underline{12}, 9, 3, \overline{6}, 13, 10, \underline{4}, 8$\\

16) $0, 1, 2, 4, 9, 3, 7, 15, 14, 13, 10, 5, 11, 6, 12, 8$ & 9) $
\;\,0, \overline{1}, 2, 5, \underline{11}, 7, 15, 14, 13, 10, 4,
\overline{9}, \underline{3}, 6, 12, 8$\\\hline
\end{tabular}
\caption{$i$ and $i^{\prime}$ are overlined while $j$ and
$j^{\prime}$ are underlined. The labels on the right refer to the
sequences on the left.}
\end{table}\label{T:Hamil16}

\section{Conclusion}\label{S:conclusion}
When $d$ divides $N$, we have proven that the cross-join graph that
corresponds to the generalized de Bruijn digraph $G_B(N,d)$ with an
arbitrary number of vertices $N$ is connected, and indeed
Hamiltonian, in the sense that it contains Hamiltonian paths. This
of course includes the class of all regular de Bruijn sequences of
arbitrary alphabet size $d$. Usefulness of this result includes in
the fact that a linearly generated sequence can be cross-joined a
number of times to obtain virtually any nonlinear sequence. Even
though the Hamiltonian path can be altered into a Hamiltonian cycle
in all cases considered in computation, the existence of these
cycles is formally still open.  Of particular interest is a
condition for an initial de~Bruijn sequence that makes Algorithm H a
Hamiltonian cycle.

As the proof of the fundamental Lemma~\ref{L:fundamental} depends on
the fact that the conjugacy of vertices of $G_B(N,d)$ is an
equivalence relation, the results of this paper are still open when
$d$ does not divide $N$.

\appendix
\section{Implementation in Rstudio}

\begin{verbatim}
largest_i<-function(i,dB,d){
  #input: a de Bruijn sequence dB and a pair of locations i[1]<i[2]
  #output: The largest conjugate pair (i[1],i2]) with i[1]<i[2] that
  #is less than or equal, lexicographically, to the input i
  N=length(dB);
  i1=i[1];
  i2=i[2];
  M=N/d;
  while(i1>0){
    while(i2>i1){
      if(dB[i1]%%M==dB[i2]%%M) break
      else i2=i2-1
    }
    if(i2==i1){
      i1=i1-1;
      i2=N-1
    }
    else break #avoid infinite loop
  }
  return(c(i1,i2))
}

largest_j<-function(i,j, dB,d){
  #INPUT: a de Bruijn sequence dB, a conjugate pair (i[1],i[2])
  # in dB with i[1]<i[2], and another pair of locations (j[1],j[2])
  # with  j[1]>i[2], i[1]<j[2]<=i[2]
  #OUTPUT: The largest conjugate pair (j[1],j[2]) with j[1]>i[2]
  #and i[1]+1<j[2]<=i[2] that is less than or equal, lexicographically,
  #to the input j and that makes a cross-join pair with (i[1],i[2])
  #it returns j[1]=i[2] if no such pair j is possible for pair i
  N=length(dB);
  j1=j[1];
  j2=j[2];
  M=N/d;
  while(j1>i[2]){
    while(j2>i[1]){
      if(dB[j1]%%M==dB[j2]%%M) break #found a conj. pair at j1, j2
      else j2=j2-1
    }
    if(j2==i[1]){
      j1=j1-1;
      j2=i[2] #begin with (j1,j2) as lex-largest with new reduction in j1
    }
    else break #avoid infinite loop: continue inner break
  }
  return(c(j1,j2))
}

crossjoin_HP<-function(dB,d=2){
  #INPUT: generalized deBruijn cycle dB that starts with 0, d is multiplier
  #OUTPUT: a list X that includes a Hamiltonian path across
  #the cross-join graph starting with dB as the first vertex
  X=list(dB);
  N=length(dB);
  L=1; #will store current length of Hamil path
  i=c(N-2,N-1);
  j=c(N,N-1);
  while(i[1]>0){
    i=largest_i(i,dB,d); #get lex largest conjugate pair i
                         #that is less or equal to input i
    j=c(N,i[2]);
    if (i[1]<=0) break;
    while(j[1]>i[2]){
      j=largest_j(i,j,dB,d);
      if((j[1]>i[2]) && (j[2]>i[1])){#just located cross-join partner j for i
        B=c(dB[1:(i[1])],dB[(i[2]+1):(j[1])]);
        if ((i[2]-i[1])==1) B=c(B,dB[i[2]]) else{
          if (j[2]<i[2]) B=c(B,dB[(j[2]+1):(i[2])]);# else B=c(B,dB[i[2]]);
          if (i[1]<j[2]) B=c(B,dB[(i[1]+1):(j[2])]);}# else B=c(B,dB[j[2]]);}
        if (j[1]<N) B=c(B,dB[(j[1]+1):N]);
        if (is_new(B,X)){
          #L=L+1;
          L=length(X)+1;
          X[[L]]=B;
          dB=B;
          #reset i and j for new iteration
          i=c(N-2,N-1); j=c(N,N-1);
          break
        }
        else {#adjust j
          if (j[2]>i[1]+1) j[2]=j[2]-1
          else {j[1]=j[1]-1;j[2]=i[2]}
        }
      }
    }
    if(i[2]>i[1]+1) i[2]=i[2]-1
    else {i[1]=i[1]-1;i[2]=N-1}
  }
  return(X)
}
\end{verbatim}


\begin{thebibliography}{99}
\bibitem{deBruijn46} N. G. de Bruijn,  A Combinatorial Problem, Koninklijke Nederlandse Akademie v. Wetenschappen 49, pp. 758764, (1946).

\bibitem{Chang} T. Chang, I. Song, and S. H. Cho, Some properties of cross-join pairs in maximum length linear sequences. Proc. ISZTA 90, Honolulu, Hawaii, pp.
1077- 1079, (1990).

\bibitem{Dabrowski} P.Dabrowski, G. Labuzek, T. Rachwalik, J. Szmidt, Searching for
nonlinear feedback shift registers with parallel computing. IACR
Cryptology ePrint Archive 2013/542. 2013 Military Communications and
Information Systems Conference. MCC 2013, Malto, France.

\bibitem{DuHwang} D.Z Du, F.K Hwang, Generalized de Bruijn Digraphs, Networks,
vol. 18, pp. 27-38, (1988).

\bibitem{Du91} D.Z Du, D. F Hsu, F.K. Hwang, X. M Zhang, The Hamiltonian property of generalized de Bruijn
digraphs, Journal of Combinatorial Theory, Series B,  vol. 52,
pp.1-8, (1991).

\bibitem{Dubrova} E. Dubrova, A scalable method for constructing Galois NLFSRs with
period 2n-1 using cross-join pairs. IEEE Trans. on Inform. Theory,
59(1), pp. 703-709, (2013).

\bibitem{Marie} C. Flye-Sainte Marie, Solution to problem number 58, l'Intermediare
des Mathematiciens, vol. 1, pp. 107-110, (1894).

\bibitem{fred1982}  H. Fredricksen, A Survey of Full Length Nonlinear Shift Register
Cycle Algorithms. SIAM Review, Vol. 24, No. 2, pp. 195-221, (1982).

\bibitem{Golomb67} S. Golomb, Shift register sequences. San Fransisco, Holden-Day,
(1967), revised edition, Laguna Hills, CA, Aegean Park Press, 1982.

\bibitem{Good46} I. J. Good, Normal recurring decimals. Journal of the London
Mathematical Society 21 (3) pp. 167-169, (1946).

\bibitem{hellesethKlove} T. Helleseth, T. Kl\"{o}ve, The Number of Cross-join pairs in
maximum length linear sequences. IEEE Trans. on Inform. theory, 31,
pp. 1731-1733, (1991).

\bibitem{ImaseItoh} M. Imase, M. Itoh, A Design for Directed Graphs with Minimum
Diameter. IEEE Transactions on Computers. Vol. C-32, No. 8, August
1983.

\bibitem{Martin34} M. H. Martin, A Problem in Arrangements. Bulletin of the American
Mathematical Society, 40, 859-864, (1934).

\bibitem{Massey67} J. L. Massey, Shift-register synthesis and BCH decoding. IEEE
Trans. Infor- mation Theory, IT-15 (1): 122-127 (1969).

\bibitem{MykkSzmidt2014} J. Mykkeltveit, J. Szmidt, On Cross Joining de Bruijn
Sequences. Contemporary Mathematics, 63, pp.335-346, (2015).


\bibitem{ReddyPK} S. M. Reddy, D. K. Pradhan and J. G. Kuhl, Direct Graphs with
Minimum Diameter and Maximal Connectivity. School of Engineering,
Oakland University Tech. Rep., July 1980.

\bibitem{sawada1} J. Sawada, A. Williams, and D. Wong. A Surprisingly Simple de Bruijn Sequence Construction. Discrete Math., 339, pp 127-131, (2016).

\bibitem{sawada2} J. Sawada, A. Williams, and D. Wong. A Simple Shift Rule for k-ary de Bruijn
sequences. Discrete Math., 340, pp 524-531, (2017).

\bibitem{Szmidt} J. Szmidt, Nonlinear feedback shift registers and Zech
logarithms, arXiv:1710.09556v2, (2017).

\bibitem{Turan} M.S. Turan, On the nonlinearity of maximum-length NFSR
feedbacks. Cryptography and Communications, 4(3-4), pp. 233-243,
(2012).
\end{thebibliography}
\end{document}